
\documentclass[preprint,12pt]{elsarticle}
\usepackage{eurosym}
\usepackage{lmodern}
\usepackage[utf8]{inputenc}
\usepackage[T1]{fontenc}
\usepackage[english]{babel}
\usepackage{amsfonts}
\usepackage{amsmath}
\usepackage{amssymb}

\setcounter{MaxMatrixCols}{10}

\biboptions{sort&compress}
\selectlanguage{english}
\newtheorem{theorem}{Theorem}

\newtheorem{corollary}[theorem]{Corollary}

\newtheorem{lemma}[theorem]{Lemma}

\newtheorem{proposition}[theorem]{Proposition}

\newenvironment{proof}[1][Proof]{\noindent\textbf{#1.} }{\ \rule{0.5em}{0.5em}}
\author[affil1]{Donatella Bongiorno}
\author[affil2]{Lucian Coroianu \corref{cor1}}
\address[affil1]{ Dipartimento di Ingegneria, Universit\`{a} degli Studi di Palermo, \\
Viale delle Scienze, 90100 Palermo, Italy}
\address[affil2]{Department of Mathematics and Computer Science, University of
Oradea, \\
Universitatii 1, 410610, Oradea, Romania}
\cortext[cor1]{Corresponding author; lcoroianu@uoradea.ro}

\begin{document}

\begin{frontmatter}

\title{Uniform convergence for sequences of best $L^{p}$  approximation}

\begin{abstract}
Let $f$ be a continuous monotone real function defined on a compact interval 
$[a,b]$ of the real line. Given a sequence of partitions of $[a,b]$, $%
\Delta_n $, $\left\Vert {\Delta }_{n}\right\Vert \rightarrow 0$, and given $l\geq 0,m\geq 1$, let $\mathbf{S}_{m}^{l}(\Delta _{n})
$ be the space of all functions with the same monotonicity of $f$ that are $%
\Delta_n$-piecewise polynomial of order $m$ and that belong to the
smoothness class $C^{l}[a,b]$. \newline
In this paper we show that, for any $m\geq 2l+1$, \newline
\ \ \ $\bullet$ sequences of best $L^p$-approximation in $\mathbf{S}%
_{m}^{l}(\Delta _{n})$ converge uniformly to $f$ on any compact subinterval
of $(a,b)$;\newline
\ \ \ $\bullet$ sequences of best $L^p$-approximation in $\mathbf{S}%
_{m}^{0}(\Delta _{n})$ converge uniformly to $f$ on the whole interval $[a,b]
$.

\medskip
\textit{MSC} 41A10; 41A29; 41A30; 41A52
\end{abstract}

\begin{keyword}
Piecewise polynomial function, $L^{p}$
approximation, Best approximation, Uniform convergence.
\end{keyword}
\end{frontmatter}

\section{Introduction}

It is well known that the $L^{p}$-convergence of a sequence $(f_{n})$ to a
continuous function $f\colon \lbrack a,b]\rightarrow {\mathbb{R}}$ doesn't
imply the uniform convergence. However J.T. Lewis and O. Shisha, in \cite%
{Shisha}, proved that in the case of increasing sequences of functions $%
(f_{n})$, the $L^{p}$-convergence of $(f_{n})$ to a continuous function $f$
implies the uniform convergence on any compact interval strictly included in 
$(a,b)$. The problem of the uniform convergence of $(f_{n})$ on the whole
interval $[a,b]$ was proved by E. Kimchi and N. Richter-Dyn in \cite{Kimchi}
by considering sequences that have first-order divided differences uniformly
bounded from above and from below.

In this paper we study sequences $(f_{n})$ of piecewise polynomial functions
obtained from the $L^{p}$ minimization under monotonicity and smoothness
constraints. More precisely, we will investigate the local and the global
uniform convergence of $(f_{n})$.

Our result comes also from an idea of E. Passow \cite{Passow}.

Given a nondecreasing and continuous function $f\colon \lbrack
a,b]\rightarrow {\mathbb{R}}$, a real $p\in \lbrack 1,\infty )$, a partition 
$\Delta $ of $[a,b]$ and integers $m,l$ with $m\geq 1$ and $0\leq l\leq m-1$%
, let us consider the projection, with respect to the $L^{p}$ norm, of $f$
onto the class of all nondecreasing $\Delta $-piecewise polynomial functions
with degree less than or equal to $m$ that belongs to the smoothness class $%
C^{l}[a,b]$. Such a projection, called the best $L^{p}$ approximation of $f$
with respect to the partition $\Delta $, there exists and it is unique
(Theorem 1). Then, given a sequence $(\Delta _{n})_{n\geq 1}$ of partitions
of $[a,b]$ whose norm tends to zero, by \cite[Theorem 1]{Passow} it is
possible to generate a sequence of $(\Delta _{n})$-piecewise polynomial
functions that converges to $f$ in the $L^{p}$ norm whenever $m\geq 2l+1$.
Combining this results with the one of Lewis and Shisha mentioned earlier,
we will prove that the sequence of best $L^{p}$ approximations of $f$ along
the partitions' sequence $(\Delta _{n})_{n\geq 1}$ converges uniformly to $f$
on any compact interval included in $\left( a,b\right) $, whenever $m\geq
2l+1$. Finally, in the special case $l=0$ and $m$ arbitrary, we extend this
results to the uniform convergence of such a sequence on the whole interval $%
\left[ a,b\right] $ (Main Theorem). It is obvious that the most stringent
question is whether such result also holds for other values of $l$ if not in
general (that is, higher order of smoothness), as well as for the cases when
the condition $m\geq 2l+1$ could be dropped or replaced by a weaker one.
Clearly, this is an important open question that worth further
investigation. The proof of the Main Theorem which is long and uses many
auxiliary results (among them, the main results in \cite{Shisha}, \cite%
{Passow} and \cite{Pence}), however, it seems to work only for the case $l=0$%
.

This paper is organized as follows: in Section 2 we present some basic
notions about piecewise polynomial functions and about $L^{p}$-type
approximations for some given real $p\in \lbrack 1,\infty )$. In the same
section, we present, also, some properties of the projection operator onto
the set of monotone piecewise polynomial functions from the class $%
C^{l}[a,b] $ of degree less than or equal to $m\geq 1$ on any compact
subinterval of a given partition. In Section 3 we give some useful results
on the uniform convergence of the sequence of best $L^{p}$ approximation of $%
f$ and we enunciate the Main Theorem. In Section 4 we prove the Main
Theorem. Finally, in Section 5 some open questions, with the purpose to
complete and generalize the result of this paper, are presented.

In conclusion, remark that, since piecewise polynomial functions are used as
tools in approximation or interpolation problems, data analysis and many
others (see \cite{Boor}, \cite{Greiner}, \cite{Junbin}), $\cdots$, for
example), this note could be interesting also from the point of view of
approximation theory even if the sequences of piecewise polynomial
functions, used in the paper, cannot be expressed analytically.

\section{Piecewise polynomial approximation of $L^{p}$- integrable functions}

For $p\in [1,+\infty)$ and for a compact interval $\left[ a,b\right] $, let $%
L^{p}[a,b]$ be the space of all $L^{p}$-integrable functions on $[a,b]$,
with the norm $\left\Vert \cdot \right\Vert _{p}:L^{p}[a,b]\rightarrow
\lbrack 0,\infty ),$ 
\begin{equation*}
\left\Vert f\right\Vert _{p}=\left( \int\limits_{a}^{b}\left\vert
f(x)\right\vert ^{p}dx\right) ^{1/p}.
\end{equation*}%
It is well known that $\left( L^{p}[a,b],\left\Vert \cdot \right\Vert
_{p}\right) $ is a uniformly convex Banach space for any $p\in (1, +\infty)$.

It is also well known that, for each $p\in \lbrack 1,+\infty )$, $L^{p}[a,b]$
includes the class of all continuous functions on $[a,b]$ and the class of
all monotone functions on $[a,b]$. Moreover it is also well known that, for
some $j\in \mathbb{N}$, the class of all functions that are continuously
differentiable of order $j$ on $[a,b]$ is denoted by $C^{j}[a,b]$. In
particular, $C[a,b]$ denotes the class of all continuous functions defined
on $\left[ a,b\right] $, with the norm $\left\Vert \cdot \right\Vert
:C[a,b]\rightarrow \lbrack 0,\infty ),$ 
\begin{equation*}
\left\Vert f\right\Vert =\max_{x\in \lbrack a,b]}\left\vert f(x)\right\vert 
\text{.}
\end{equation*}

Let $\mathcal{D}[a,b]$ be the set of all partitions of $[a,b]$. If $a=\alpha
_{1}<\alpha _{2}<...<\alpha _{n}=b$ is a partition of $[a,b],$ then we
identify it with the vector ${\Delta }_{n}=\left( \alpha _{1},\alpha
_{2},...,\alpha _{n}\right) $. As usual, we denote by $\left\Vert {\Delta }%
_{n}\right\Vert $ the norm of this partition, where $\left\Vert {\Delta }%
_{n}\right\Vert =\max \{\alpha _{i+1}-\alpha _{i}:1\leq i\leq n-1\}$. A
function $f:[a,b]\rightarrow \mathbb{R}$ is called ${\Delta }_{n}$-piecewise
polynomial of order $m$, with $m\geq 1$, if the restriction of $f$ on any
interval $[\alpha _{i},\alpha _{i+1}]$, $i=\overline{1,n-1}$ is a polynomial
function of degree less than or equal to $m$. Let $\mathbf{P}_{m,{\Delta }%
_{n}}$ be the space of all functions that are ${\Delta }_{n}$-piecewise
polynomial of order $m$ and let $\mathbf{S}_{m}^{l}(\Delta _{n})=\mathbf{P}%
_{m,{\Delta }_{n}}\cap C^{l}[a,b]$ be the class of all ${\Delta }_{n}$%
-piecewise polynomials of order $m$ which are continuously differentiable up
to order $l$, for some nonnegative integer $l\leq m-1$. Moreover let $%
\mathbf{SI}_{m}^{l}(\Delta _{n})$ and $\mathbf{SD}_{m}^{l}(\Delta _{n})$ be
the set of all nondecreasing and nonincreasing functions in $\mathbf{S}%
_{m}^{l}(\Delta _{n})$, respectively. Clearly, $\mathbf{SD}_{m}^{l}(\Delta
_{n})=-\mathbf{SI}_{m}^{l}(\Delta _{n})$. In all that follows in this paper
we will consider only the case of best $L^{p}$ approximation in $\mathbf{SI}%
_{m}^{l}(\Delta _{n})$ and in the main results we will consider only
nondecreasing functions to be approximated. Obviously the case of
approximation of nondecreasing functions in $\mathbf{SD}_{m}^{l}(\Delta
_{n}) $ has the same treatment and therefore we omit the details that the
reader can easily deduce from the other case.

It is easy to observe that $\mathbf{S}_{m}^{l}(\Delta _{n})$ is a finite
dimensional linear subspace of $L^{p}[a,b]$ and that $\mathbf{SI}%
_{m}^{l}(\Delta _{n})$ and $\mathbf{SD}_{m}^{l}(\Delta _{n})$ are nonempty
closed cones in $\mathbf{S}_{m}^{l}(\Delta _{n})$. Therefore, by the
existence and uniqueness of the projection onto a closed convex subset of a
Banach uniformly convex space, it follows the existence and uniqueness of
the best approximation with respect to $\left\Vert \cdot \right\Vert _{p}$,
with $p\in (1,+\infty )$, in the spaces $\mathbf{SI}_{m}^{l}(\Delta _{n})$
and $\mathbf{SD}_{m}^{l}(\Delta _{n}),$ respectively. In additions, even if $%
L^{1}[a,b]$ is no longer a Banach uniformly convex space (nor even
reflexive), the best approximation with respect to $\left\Vert \cdot
\right\Vert _{1}$ always exists and in the particular case $f\in C[a,b]$,
the projection of $f$ onto $\mathbf{SI}_{m}^{l}(\Delta _{n})$ and $\mathbf{SD%
}_{m}^{l}(\Delta _{n})$ respectively, not only there exists but also it is
unique (see Theorem 2.3 in \cite{Pence} and see also \cite{lorentz}). Note
that in \cite{Pence} the results is obtained in a more general setting for
piecewise polynomial functions satisfying generalized convexity constraints.
Consequently, we can state the following result:

\begin{theorem}
\label{exist+uniq} If $p\in \lbrack 1,\infty )$, $m\geq 1$, $0\leq l\leq m-1$%
, $f\in C[a,b]$ and ${\Delta }_{n}\in \mathcal{D}[a,b]$, then there exists
uniquely $\mathbf{SI}_{m}^{l}(\Delta _{n},f,p)\in \mathbf{SI}_{m}^{l}(\Delta
_{n})$ so that 
\begin{equation*}
\left\Vert f-\mathbf{SI}_{m}^{l}(\Delta _{n},f,p)\right\Vert _{p}=\min_{g\in 
\mathbf{SI}_{m}^{l}(\Delta _{n})}\left\Vert f-g\right\Vert _{p}\text{.}
\end{equation*}
\end{theorem}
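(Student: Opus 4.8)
The plan is to split the argument according to whether $p\in(1,\infty)$ or $p=1$, since the two regimes rest on different functional-analytic principles, while the geometric object onto which we project is the same in both: the set $\mathbf{SI}_{m}^{l}(\Delta_{n})$ regarded as a subset of $L^{p}[a,b]$. First I would record the three structural properties that make any projection machinery applicable. Nonemptiness is immediate, since every constant function is nondecreasing, is a polynomial of degree $0\leq m$, and is $C^{l}$, hence belongs to the set. Convexity follows because the sum of two nondecreasing functions is nondecreasing and $\mathbf{S}_{m}^{l}(\Delta_{n})$ is a linear space, so $\mathbf{SI}_{m}^{l}(\Delta_{n})$ is stable under convex combinations. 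Closedness is where the finite dimensionality enters decisively: as $\mathbf{S}_{m}^{l}(\Delta_{n})$ is a finite-dimensional subspace of $L^{p}[a,b]$, on it the $L^{p}$ norm is equivalent to any norm inducing pointwise (indeed uniform) convergence, so an $L^{p}$-limit of nondecreasing elements of $\mathbf{SI}_{m}^{l}(\Delta_{n})$ is again nondecreasing and piecewise polynomial of the required type.

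For $p\in(1,\infty)$ I would simply invoke the uniform convexity of $L^{p}[a,b]$ recalled in the excerpt. The standard theorem of Banach-space geometry states that in a uniformly convex (hence reflexive and strictly convex) Banach space the metric projection onto any nonempty closed convex subset exists and is unique. Applying this to the point $f$ and the closed convex cone $\mathbf{SI}_{m}^{l}(\Delta_{n})$ yields at once both the existence and the uniqueness of the minimizer, which we name $\mathbf{SI}_{m}^{l}(\Delta_{n},f,p)$.

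The case $p=1$ is the main obstacle, precisely because $L^{1}[a,b]$ is neither uniformly convex nor reflexive, so the general projection theorem is unavailable. Existence can nonetheless be recovered from finite dimensionality alone: starting from a minimizing sequence $(g_{k})$ in $\mathbf{SI}_{m}^{l}(\Delta_{n})$, its $L^{1}$ norms are bounded, hence by the norm equivalence on the finite-dimensional space $\mathbf{S}_{m}^{l}(\Delta_{n})$ the sequence is bounded in every norm; extracting a convergent subsequence produces a limit that, by the closedness established above, lies in $\mathbf{SI}_{m}^{l}(\Delta_{n})$ and attains the infimum. Uniqueness, by contrast, is the genuinely delicate point, since best $L^{1}$ approximation need not be unique in general. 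Here I would exploit the hypothesis $f\in C[a,b]$ and defer to Theorem 2.3 of \cite{Pence}, which guarantees uniqueness of the best $L^{1}$ approximation to a continuous function from the cone of monotone piecewise polynomials (established there in the more general setting of generalized-convexity constraints). Combining this existence argument with that uniqueness result settles the $p=1$ case and completes the proof.
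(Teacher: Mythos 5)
Your proposal is correct and follows essentially the same route as the paper: for $p\in(1,\infty)$ the paper likewise invokes the existence and uniqueness of the metric projection onto the nonempty closed convex cone $\mathbf{SI}_{m}^{l}(\Delta_{n})$ in the uniformly convex space $L^{p}[a,b]$, and for $p=1$ it likewise obtains existence from the finite-dimensional structure and uniqueness (for continuous $f$) from Theorem 2.3 of \cite{Pence}. The extra detail you supply on nonemptiness, convexity, closedness, and the minimizing-sequence argument is sound and merely makes explicit what the paper leaves implicit.
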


Taking into account the facts from above, for some given real $p\in \lbrack
1,+\infty )$, we consider the approximation operator $\mathbf{U}_{m,\Delta
_{n}}^{l}:C[a,b]\rightarrow \mathbf{SI}_{m}^{l}(\Delta _{n})$ defined by $%
\mathbf{U}_{m,\Delta _{n}}^{l}(f)=\mathbf{SI}_{m}^{l}(\Delta _{n},f,p)$.

The following properties are immediate and therefore we omit their proofs.

\begin{proposition}
\label{properties approx} Let $f\in C[a,b]$. For some $p\in \lbrack 1,
+\infty)$, $m\geq 1$, $0\leq l\leq m-1$ and ${\Delta }_{n}\in \mathcal{D}%
[a,b]$ with ${\Delta }_{n}=(\alpha _{1},\alpha _{2},...,\alpha _{n})$, we
have that:

(i) $\mathbf{SI}_{m}^{l}(\Delta _{n},f-c,p)=\mathbf{SI}_{m}^{l}(\Delta
_{n},f,p)-c$, for all $c\in \mathbb{R};$

(ii) $\mathbf{SI}_{m}^{l}(\Delta _{n},cf,p)=c\cdot \mathbf{SI}%
_{m}^{l}(\Delta _{n},f,p)$ and $\mathbf{SD}_{m}^{l}(\Delta _{n},cf,p)=c\cdot 
\mathbf{SD}_{m}^{l}(\Delta _{n},f,p)$, for all $c\in \left[ 0,\infty \right]
;$
\end{proposition}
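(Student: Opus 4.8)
The plan is to derive both identities from two structural features of the cone $\mathbf{SI}_m^l(\Delta_n)$ together with the uniqueness granted by Theorem~\ref{exist+uniq}: first, that the cone is invariant under adding an arbitrary real constant; second, that it is invariant under multiplication by a nonnegative scalar (being a cone). In each case I would exhibit an explicit bijection of the cone onto itself that turns the minimization problem for the perturbed function into the one for $f$, and then invoke uniqueness to identify the image of the minimizer as the sought best approximation.

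For (i), observe that every constant function lies in $\mathbf{S}_m^l(\Delta_n)$ (it is a degree-zero polynomial, hence of order $\le m$ and of class $C^l$), and that adding a constant preserves monotonicity; hence $h \mapsto h+c$ maps $\mathbf{SI}_m^l(\Delta_n)$ bijectively onto itself for every $c\in\mathbb{R}$. Writing $g^{*} := \mathbf{SI}_m^l(\Delta_n,f,p)$ and substituting $g = h - c$, I would compute
$$\min_{g\in \mathbf{SI}_m^l(\Delta_n)}\Vert (f-c)-g\Vert_p = \min_{h\in \mathbf{SI}_m^l(\Delta_n)}\Vert f-h\Vert_p,$$
the right-hand minimum being attained precisely at $h=g^{*}$. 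Since $g = g^{*}-c$ is the corresponding minimizer on the left and lies in the cone, uniqueness (Theorem~\ref{exist+uniq}) forces $\mathbf{SI}_m^l(\Delta_n,f-c,p)=g^{*}-c$.

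For (ii) with $c>0$, the cone is closed under multiplication by $c$ and by $1/c$, so $h\mapsto ch$ is a bijection of $\mathbf{SI}_m^l(\Delta_n)$ onto itself; using the positive homogeneity $\Vert c(f-h)\Vert_p = c\,\Vert f-h\Vert_p$ I would transform $\min_g \Vert cf - g\Vert_p$ into $c\min_h\Vert f-h\Vert_p$, whose minimizer is $h=g^{*}$, so that $cg^{*}$ minimizes on the left; uniqueness again yields $\mathbf{SI}_m^l(\Delta_n,cf,p)=c\,g^{*}$. The case $c=0$ is immediate, since the zero function belongs to the cone and is its own unique best approximation of $0$. The assertion for $\mathbf{SD}_m^l(\Delta_n)$ follows verbatim, as that set is likewise a cone closed under nonnegative scaling.

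There is essentially no deep obstacle here — this is why the authors call the properties immediate — and the only points requiring care are the two invariance claims and the fact that the argmin is transported correctly by the bijections. The nonnegativity of $c$ in (ii) is essential: multiplication by a negative scalar interchanges $\mathbf{SI}_m^l(\Delta_n)$ with $\mathbf{SD}_m^l(\Delta_n)$, which is exactly why the two monotonicity classes appear together in the statement. Uniqueness is what upgrades ``$cg^{*}$ attains the minimum'' to ``$cg^{*}$ is the best approximation,'' so Theorem~\ref{exist+uniq} is used decisively in both parts.
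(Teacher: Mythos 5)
Your proof is correct, and it supplies exactly the standard argument the authors have in mind: the paper itself omits the proof, declaring the properties immediate. The change-of-variables in the minimization (translation of the cone by a constant for (i), nonnegative scaling of the cone for (ii)) combined with the uniqueness from Theorem~\ref{exist+uniq} is precisely what makes them immediate, and your handling of the edge case $c=0$ and of the $\mathbf{SD}$ variant is sound.
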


In all that follows, we no longer impose that ${\Delta }_{n}\in \mathcal{D}%
[a,b]$ has exactly $n$ knots. Indeed, later on we construct sequences of
partitions of $[a,b]$ such that any partition of the sequence has an
arbitrary number of knots. Therefore, in general, we will consider a vector
of the form ${\Delta }_{n}=(\alpha _{1},\alpha _{2},...,\alpha _{k_{n}})$.

In the next result we will provide sufficient conditions which will imply
that the sequence $\left( \mathbf{SI}_{m}^{l}(\Delta _{n},f,p)\right)
_{n\geq 1}$ converges to $f$ in the $L^{p}$ norm $\left\Vert \cdot
\right\Vert _{p}$ for some given $p\in \lbrack 1,\infty )$. Combining this
result with several auxiliary results, in the next section we will be able
to prove a local uniform convergence for this sequence and even a global
uniform convergence in the special case $l=0$. At the moment, the next
result will not suffice to obtain these uniform convergence properties since
we will rely on a rate of convergence for an estimation in the $L^{p}$ norm
(see relation (1) in the proof) and it is well known that in general such
estimations will not necessarily imply convergence in the uniform norm based
on the simple fact that these two norms are not equivalent.

\begin{proposition}
\label{convergence L2}Suppose that $f:[a,b]\rightarrow \mathbb{R}$ is a
continuous function and consider in $\mathcal{D}[a,b]$ a sequence $\left({\
\Delta }_{n}\right) _{n\geq 1}$ with $\left\Vert {\Delta }_{n}\right\Vert
\rightarrow 0$. Moreover, let $l$ be an arbitrary nonnegative integer. Then
if $f$ is nondecreasing, for any $p\in \lbrack 1,\infty )$ and $m\geq 2l+1$
we have:

\begin{equation*}
\left\Vert f-\mathbf{SI}_{m}^{l}(\Delta _{n},f,p)\right\Vert _{p}\leq
(b-a)^{1/p}\omega _{f}(\left\Vert {\Delta }_{n}\right\Vert )\text{, for all }%
n\geq 1\text{,}
\end{equation*}
where

\begin{equation*}
\omega _{f}(\delta )=\sup \{\left\vert f(x)-f(y)\right\vert :x,y\in \lbrack
a,b]\text{,}\left\vert x-y\right\vert \leq \delta \}\text{}
\end{equation*}
denotes the modulus of uniform continuity of $f.$

So, in particular, we have: 
\begin{equation*}
\lim_{n\rightarrow \infty }\left\Vert f-\mathbf{SI}_{m}^{l}(\Delta
_{n},f,p)\right\Vert _{p}=0.
\end{equation*}
\end{proposition}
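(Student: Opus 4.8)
The plan is to exploit the extremal (minimizing) property of the best $L^{p}$ approximation: since $\mathbf{SI}_{m}^{l}(\Delta _{n},f,p)$ realizes the minimum of $\left\Vert f-g\right\Vert _{p}$ over all $g\in \mathbf{SI}_{m}^{l}(\Delta _{n})$ (Theorem \ref{exist+uniq}), it suffices to exhibit a single convenient competitor $g_{n}\in \mathbf{SI}_{m}^{l}(\Delta _{n})$ whose distance to $f$ we can control, and then estimate that distance from above. The whole argument reduces to producing one good comparison function and then passing from the uniform norm to the $L^{p}$ norm on the finite-length interval $[a,b]$.

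The comparison function will come from \cite[Theorem 1]{Passow}. Under the hypothesis $m\geq 2l+1$, that result guarantees, for each partition $\Delta _{n}=(\alpha _{1},\dots ,\alpha _{k_{n}})$, the existence of a nondecreasing $\Delta _{n}$-piecewise polynomial $g_{n}$ of order $m$ belonging to $C^{l}[a,b]$ (hence $g_{n}\in \mathbf{SI}_{m}^{l}(\Delta _{n})$) that interpolates $f$ at the knots, i.e. $g_{n}(\alpha _{i})=f(\alpha _{i})$ for all $i$. This is the content of relation (1). The key pointwise estimate then follows from comonotonicity alone: fix a subinterval $[\alpha _{i},\alpha _{i+1}]$ and $x$ in it. Because $f$ is nondecreasing, $f(\alpha _{i})\leq f(x)\leq f(\alpha _{i+1})$; because $g_{n}$ is nondecreasing and interpolates $f$ at the knots, $f(\alpha _{i})=g_{n}(\alpha _{i})\leq g_{n}(x)\leq g_{n}(\alpha _{i+1})=f(\alpha _{i+1})$. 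Thus both $f(x)$ and $g_{n}(x)$ lie in the same interval $[f(\alpha _{i}),f(\alpha _{i+1})]$, whence
\begin{equation*}
\left\vert f(x)-g_{n}(x)\right\vert \leq f(\alpha _{i+1})-f(\alpha _{i})\leq \omega _{f}(\alpha _{i+1}-\alpha _{i})\leq \omega _{f}(\left\Vert \Delta _{n}\right\Vert ).
\end{equation*}
Since $x$ and $i$ are arbitrary, this yields the global bound $\left\Vert f-g_{n}\right\Vert \leq \omega _{f}(\left\Vert \Delta _{n}\right\Vert )$.

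It then remains only to assemble the chain of inequalities. Using the minimality of the best approximation, the elementary bound $\left\Vert h\right\Vert _{p}\leq (b-a)^{1/p}\left\Vert h\right\Vert $ valid for any $h\in C[a,b]$, and the estimate just obtained, I would write
\begin{equation*}
\left\Vert f-\mathbf{SI}_{m}^{l}(\Delta _{n},f,p)\right\Vert _{p}\leq \left\Vert f-g_{n}\right\Vert _{p}\leq (b-a)^{1/p}\left\Vert f-g_{n}\right\Vert \leq (b-a)^{1/p}\omega _{f}(\left\Vert \Delta _{n}\right\Vert ),
\end{equation*}
which is exactly the claimed inequality. The final limit statement is immediate: $f$ continuous on the compact interval $[a,b]$ is uniformly continuous, so $\omega _{f}(\delta )\rightarrow 0$ as $\delta \rightarrow 0$, and since $\left\Vert \Delta _{n}\right\Vert \rightarrow 0$ we conclude $\omega _{f}(\left\Vert \Delta _{n}\right\Vert )\rightarrow 0$, forcing the $L^{p}$ distance to vanish. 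The only genuinely substantive step is the first one, namely the invocation of Passow's construction: everything hinges on the existence of a comonotone interpolating $C^{l}$ piecewise polynomial of order $m$, and it is precisely here that the constraint $m\geq 2l+1$ is needed to reconcile the smoothness class $C^{l}$, the prescribed interpolation data at the knots, and the preservation of monotonicity; the remaining estimates are routine.
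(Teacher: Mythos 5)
Your proposal is correct and follows essentially the same route as the paper: invoke Passow's comonotone $C^{l}$ spline interpolant at the knots (which exists precisely because $m\geq 2l+1$), bound $\left\vert f-g_{n}\right\vert$ on each subinterval by $f(\alpha _{i+1})-f(\alpha _{i})\leq \omega _{f}(\left\Vert \Delta _{n}\right\Vert )$ via comonotonicity, pass to the $L^{p}$ norm with the factor $(b-a)^{1/p}$, and finish with the minimality of the best approximation. The only cosmetic difference is that the paper obtains the interpolant in $\mathbf{SI}_{2l+1}^{l}(\Delta _{n})$ and then notes it lies in $\mathbf{SI}_{m}^{l}(\Delta _{n})$ since $m\geq 2l+1$, whereas you state it directly as an order-$m$ spline.
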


\begin{proof}
Let us choose arbitrary $n\geq 1$ and suppose that ${\Delta }_{n}=(\alpha
_{1},\alpha _{2},...,\alpha _{k_{n}})$. In \cite{Passow} (see Theorem 1
there) it is proved that if $\left( x_{i},y_{i}\right) _{i=1,s}$ \ are in $%
\mathbb{R}^{2}$ with $x_{1}<x_{2}<\cdot \cdot \cdot <x_{s}$ then there
exists $P\in \mathbf{S}_{2l+1}^{l}(\Delta _{s})$, where $\Delta
_{s}=(x_{1},...,x_{s})$, such that $P(x_{i})=y_{i}$, $i=\overline{1,s}$ and $%
P$ is monotone on any interval $\left[ x_{i},x_{i+1}\right] $, $i=\overline{%
1,s-1}$. Then, applying this result to $(\alpha _{i},f(\alpha
_{i}))_{i=1,k_{n}}$ we get $P\in \mathbf{SI}_{2l+1}^{l}(\Delta _{n})$ that
interpolates $f$ on the knots of ${\Delta }_{n}$, and since $m\geq 2l+1$, it
follows that $P\in \mathbf{SI}_{m}^{l}(\Delta _{n})$.

So, since $f$ and $P$ are both nondecreasing and since $f(\alpha
_{i})=P(\alpha _{i})$, $i=\overline{1,k_{n}}$, it easily results that for
any $i\in \{1,...,k_{n}-1\}$ and $x\in \lbrack \alpha _{i},\alpha _{i+1}]$
we have $\left\vert f(x)-P(x)\right\vert \leq f(\alpha _{i+1})-f(\alpha
_{i}).$ Moreover, since 
\begin{equation*}
f(\alpha _{i+1})-f(\alpha _{i})\leq \omega _{f}(\alpha _{i+1}-\alpha
_{i})\leq \omega _{f}(\left\Vert {\Delta }_{n}\right\Vert )
\end{equation*}%
for all $i\in \{1,...,k_{n}-1\}$, we have 
\begin{equation*}
\left\vert f(x)-P(x)\right\vert \leq \omega _{f}(\left\Vert {\Delta }%
_{n}\right\Vert )\text{, for all }x\in \lbrack a,b]\text{.}
\end{equation*}%
This easily implies 
\begin{equation*}
\left\Vert f-P\right\Vert _{p}\leq (b-a)^{1/p}\omega _{f}(\left\Vert {\Delta 
}_{n}\right\Vert ).
\end{equation*}

Then, by the definition of $\mathbf{SI}_{m}^{l}(\Delta _{n},f,p)$ and since $%
P\in \mathbf{SI}_{m}^{l}(\Delta _{n})$, it results that 
\begin{equation}
\left\Vert f-\mathbf{SI}_{m}^{l}(\Delta _{n},f,p)\right\Vert _{p}\leq
\left\Vert f-P\right\Vert _{p}\leq (b-a)^{1/p}\omega _{f}(\left\Vert {\Delta 
}_{n}\right\Vert ).  \label{b}
\end{equation}%
The continuity of $f$ and the fact that $\left\Vert {\Delta }_{n}\right\Vert
\rightarrow 0$ implies $\omega _{f}(\left\Vert {\Delta }_{n}\right\Vert
)\rightarrow 0$, hence by (\ref{b}) we have $\lim_{n\rightarrow \infty
}\left\Vert f-\mathbf{SI}_{m}^{l}(\Delta _{n},f,p)\right\Vert _{p}=0$ and
the proof is complete.
\end{proof}

\section{Uniform convergence of best piecewise polynomial approximations}

This section is dedicated to the study of the uniform convergence of the
sequence of all piecewise polynomial functions $(f_{n})_{n\geq 1}$ to a
continuous nondecreasing function $f$ of the sequence $(\mathbf{SI}%
_{m}^{l}(\Delta _{n},f,p))_{n\geq 1}$ whenever $\left\Vert {\Delta }%
_{n}\right\Vert \rightarrow 0$.

It is well known that if $f$ and $(f_{n})_{n\geq 1}$ are continuous
functions on $[a,b]$ such that $(f_{n})_{n\geq 1}$ converges uniformly to $f$
on $[a,b]$ then $(f_{n})_{n\geq 1}$ converges towards $f$ in the $L^{p}$
norm for any $p\in [1, \infty)$. But it is also well known that the converse
property does not hold in general. However, if $(f_{n})_{n\geq 1}$ is a
sequence of nondecreasing functions then the following theorem provides us a
local convergence property in the uniform norm.

\begin{theorem}
\label{T4} \label{uniform_converg_incr}(see \cite[Theorem 1]{Shisha}) Let $f$
be a real, continuous function on the finite interval $(a,b)$, and let $%
(f_{n})_{n\geq 1}$ be a sequence of nondecreasing functions on $(a,b)$ such
that $\lim\limits_{n\rightarrow \infty }\left\Vert f_{n}-f\right\Vert _{p}=0$%
, where $1\leq p<\infty $. Then for any $c$ and $d$ with $a<c<d<b$, the
sequence $(f_{n})_{n\geq 1}$ converges uniformly to $f$ on $[c,d]$.
\end{theorem}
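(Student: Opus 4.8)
The plan is to exploit the monotonicity of the $f_{n}$ in two separate ways: first to pin down the limit of $f_{n}$ at every individual point of $(a,b)$ by means of two-sided integral averages, and then to upgrade this pointwise convergence to uniform convergence on $[c,d]$ through a sandwiching argument. Notably, only the continuity of $f$ (and no monotonicity of the limit itself) will be needed; the monotonicity of the $f_{n}$ carries the whole weight.

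First I would establish pointwise convergence. Fix $x_{0}\in(a,b)$ and choose $h>0$ so small that $[x_{0}-h,x_{0}+h]\subset(a,b)$. Since each $f_{n}$ is nondecreasing, for every $t\in[x_{0},x_{0}+h]$ we have $f_{n}(t)\ge f_{n}(x_{0})$, and for every $t\in[x_{0}-h,x_{0}]$ we have $f_{n}(t)\le f_{n}(x_{0})$; integrating these yields
\[
\frac{1}{h}\int_{x_{0}-h}^{x_{0}}f_{n}(t)\,dt\;\le\;f_{n}(x_{0})\;\le\;\frac{1}{h}\int_{x_{0}}^{x_{0}+h}f_{n}(t)\,dt.
\]
On a finite interval, $L^{p}$-convergence forces $L^{1}$-convergence: by H\"older's inequality $\int_{I}\left\vert f_{n}-f\right\vert\le\left\vert I\right\vert^{1-1/p}\left\Vert f_{n}-f\right\Vert_{p}\to0$, so the two outer averages converge to the corresponding averages of $f$. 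Taking $\limsup$ and $\liminf$ in $n$ and then letting $h\to0^{+}$, the continuity of $f$ makes both averages tend to $f(x_{0})$, and the squeeze gives $f_{n}(x_{0})\to f(x_{0})$. As $x_{0}$ was arbitrary, $f_{n}\to f$ pointwise on all of $(a,b)$.

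Next I would convert this into uniform convergence on $[c,d]$. Using the uniform continuity of $f$ on the compact set $[c,d]$, pick a partition $c=s_{0}<s_{1}<\cdots<s_{M}=d$ fine enough that $\left\vert f(s_{j+1})-f(s_{j})\right\vert<\varepsilon$ for all $j$. By the pointwise step, for $n$ large we have $\left\vert f_{n}(s_{j})-f(s_{j})\right\vert<\varepsilon$ simultaneously at the finitely many knots $s_{0},\dots,s_{M}$. For any $x\in[s_{j},s_{j+1}]$, monotonicity of $f_{n}$ gives $f_{n}(s_{j})\le f_{n}(x)\le f_{n}(s_{j+1})$, whence
\[
f_{n}(x)-f(x)\le f_{n}(s_{j+1})-f(s_{j})<2\varepsilon,
\]
\[
f_{n}(x)-f(x)\ge f_{n}(s_{j})-f(s_{j+1})>-2\varepsilon,
\]
so $\sup_{x\in[c,d]}\left\vert f_{n}(x)-f(x)\right\vert\le2\varepsilon$ for all large $n$, which is the desired uniform convergence.

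The crux of the argument, and the reason the conclusion is only local, is the pointwise step: pinning down $f_{n}(x_{0})$ requires integral averages on \emph{both} sides of $x_{0}$, hence genuine two-sided room $[x_{0}-h,x_{0}+h]\subset(a,b)$. This is available at every interior point, in particular at the endpoints $c,d$ of any compact subinterval with $a<c<d<b$, but it degenerates as $x_{0}$ approaches $a$ or $b$, where a nondecreasing $f_{n}$ may develop behavior that the $L^{p}$ average from the missing side cannot control. This is precisely the obstruction that the later treatment of the case $l=0$ must overcome in order to reach the closed interval $[a,b]$.
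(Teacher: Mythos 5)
Your argument is correct in substance, but note that the paper itself gives no proof of this statement: it is quoted verbatim from Lewis and Shisha, so there is no in-paper argument to match. Your route --- trapping $f_{n}(x_{0})$ between the one-sided averages $\frac{1}{h}\int_{x_{0}-h}^{x_{0}}f_{n}$ and $\frac{1}{h}\int_{x_{0}}^{x_{0}+h}f_{n}$, passing to the limit via H\"older, and then upgrading pointwise to uniform convergence on $[c,d]$ by a P\'olya-type sandwiching over a fine partition --- is a clean direct proof; the original Lewis--Shisha argument (and the spirit of Lemma \ref{Pointwise_upper} in this paper) instead proceeds by contradiction, using monotonicity to propagate a hypothetical deviation $|f_{n}(x_{n})-f(x_{n})|\geq\varepsilon$ over an interval of fixed length inside $(a,b)$ and thereby violating $L^{p}$-convergence. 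One small slip in your second step: the displayed bound $f_{n}(x)-f(x)\leq f_{n}(s_{j+1})-f(s_{j})$ implicitly uses $f(x)\geq f(s_{j})$, which is not available since $f$ is not assumed monotone. The repair is immediate from the uniform continuity you already invoke: choose the mesh so that $|f(x)-f(y)|<\varepsilon$ whenever $x,y$ lie in a common subinterval, and write
\begin{equation*}
f_{n}(x)-f(x)\leq\bigl(f_{n}(s_{j+1})-f(s_{j+1})\bigr)+\bigl(f(s_{j+1})-f(x)\bigr)<2\varepsilon,
\end{equation*}
with the symmetric estimate from below using $s_{j}$. Your closing remark correctly identifies why the conclusion is only local and why the endpoint behavior requires the separate machinery developed later in the paper.
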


Combining Proposition \ref{convergence L2} with Theorem \ref{T4}, we obtain
the following result.

\begin{corollary}
\label{uniform_conv_local}Suppose that $f:[a,b]\rightarrow \mathbb{R}$ is a
continuous nondecreasing function and consider in $\mathcal{D}[a,b]$ a
sequence $\left( {\Delta }_{n}\right) _{n\geq 1}$, such that $\left\Vert {%
\Delta }_{n}\right\Vert \rightarrow 0$. Then for any nonnegative integer $l$
and for any $m\geq 2l+1$, the sequence $\left( \mathbf{SI}_{m}^{l}(\Delta
_{n},f,p)\right) _{n\geq 1}$ is uniformly convergent to $f$ on any interval $%
\left[ c,d\right] $, with $a<c<d<b$.
\end{corollary}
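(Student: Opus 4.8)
The plan is to chain together the two results that have just been established: Proposition~\ref{convergence L2} gives an $L^p$-convergence statement, and Theorem~\ref{T4} upgrades $L^p$-convergence of a sequence of \emph{nondecreasing} functions to uniform convergence on any compact subinterval of the open interval. Since the corollary is stated precisely for these hypotheses, the proof should be a short deduction rather than a new argument.

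First I would fix the data: let $f\colon[a,b]\to\mathbb{R}$ be continuous and nondecreasing, let $(\Delta_n)_{n\geq1}$ be a sequence of partitions with $\|\Delta_n\|\to0$, let $l\geq0$ be an integer and $m\geq 2l+1$, and fix $p\in[1,\infty)$. Write $f_n=\mathbf{SI}_m^l(\Delta_n,f,p)$ for the best $L^p$-approximation. Then I would check that the hypotheses of Theorem~\ref{T4} are met by the sequence $(f_n)_{n\geq1}$. There are two things to verify: that each $f_n$ is nondecreasing, and that $\|f_n-f\|_p\to0$.

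The monotonicity is immediate from the definition: by construction each $f_n=\mathbf{SI}_m^l(\Delta_n,f,p)$ lies in $\mathbf{SI}_m^l(\Delta_n)$, which is by definition the set of \emph{nondecreasing} functions in $\mathbf{S}_m^l(\Delta_n)$, so each $f_n$ is nondecreasing on $[a,b]$, hence in particular on $(a,b)$. The $L^p$-convergence is exactly the conclusion of Proposition~\ref{convergence L2}: since $f$ is continuous and nondecreasing, $\|\Delta_n\|\to0$, and $m\geq 2l+1$, that proposition yields $\lim_{n\to\infty}\|f-\mathbf{SI}_m^l(\Delta_n,f,p)\|_p=0$, i.e. $\|f_n-f\|_p\to0$.

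With both hypotheses verified, I would apply Theorem~\ref{T4} directly to the sequence $(f_n)_{n\geq1}$ and the continuous function $f$ restricted to $(a,b)$: for any $c,d$ with $a<c<d<b$, the theorem gives uniform convergence of $(f_n)_{n\geq1}$ to $f$ on $[c,d]$, which is exactly the assertion of the corollary. I do not anticipate any genuine obstacle here, since the corollary is a clean composition of two previously stated results; the only point requiring a moment's care is the bookkeeping noted earlier in the excerpt, namely that $\Delta_n$ need not have exactly $n$ knots (it may be written $\Delta_n=(\alpha_1,\dots,\alpha_{k_n})$), but this does not affect the argument because Proposition~\ref{convergence L2} was already proved in that generality. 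Hence the proof reduces to stating the two invocations and concluding.
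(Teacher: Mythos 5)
Your proposal is correct and follows exactly the paper's own argument: invoke Proposition~\ref{convergence L2} to get $\lVert f_n-f\rVert_p\to 0$ with $f_n=\mathbf{SI}_m^l(\Delta_n,f,p)$, note that each $f_n$ is nondecreasing by construction, and apply Theorem~\ref{T4}. Nothing further is needed.
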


\begin{proof}
By Proposition 3, we have $\lim\limits_{n\rightarrow \infty }\left\Vert 
\mathbf{SI}_{m}^{l}(\Delta _{n},f,p)-f\right\Vert _{p}=0$. Hence, we obtain
the conclusion taking $f_{n}=\mathbf{SI}_{m}^{l}(\Delta _{n},f,p)$ for any $%
n $ in Theorem \ref{T4}.
\end{proof}

Now let us consider the function $f\equiv 0$ and the sequence $%
(f_{n})_{n\geq 1}$ defined on $[0,1]$ by $f_{n}(x)=x^{n},n=1,2,\cdots $. It
is trivial to observe that $\Vert f_{n}\Vert _{p}\rightarrow 0$, for any $%
p\in \lbrack 1,\infty )$ and that $(f_{n})_{n\geq 1}$ is uniformly
convergent to zero on any $[c,d]$, with $0<c<d<1$. However, $(f_{n})_{n\geq
1}$ is not uniformly convergent to zero on $[0,1]$.

Let us note that in the previous corollary as well as in Proposition 3 which
implies this corollary, we need to assume that $m\geq 2l+1$ because
otherwise we cannot use Theorem 1 in \cite{Passow} used to prove Proposition
3. Actually, in the case $m\leq 2l$ the interpolant used to prove
Proposition 3 may not exist (see Remark 2 in \cite{Passow}). Of course, it
is an open question whether these results would hold also for $m\leq 2l$ but
then another proof is needed which does not use Theorem 1 in \cite{Passow}.

In this paper we prove that:

\noindent \textbf{Main Theorem} \label{UC piecewise approx} \textit{If $%
f:[a,b]\rightarrow \mathbb{R}$ is a continuous nondecreasing function and if 
$\left( {\Delta }_{n}\right) _{n\geq 1}\in\mathcal{D}[a,b]$ such that $%
\left\Vert {\Delta }_{n}\right\Vert \rightarrow 0$, then, for any real $p\in
\lbrack 1,\infty )$ and $m\geq 1$, the sequence $\left( \mathbf{SI}%
_{m}^{0}(\Delta _{n},f,p)\right) _{n\geq 1}$ is uniformly convergent to $f$
on $[a,b]$; that is $\lim\limits_{n\rightarrow \infty }\left\Vert \mathbf{SI}%
_{m}^{0}(\Delta _{n},f)-f\right\Vert =0$.}

To prove this Theorem we need some auxiliary results.

\begin{lemma}
\label{uc nondecreasing}(see \cite[Lemma 4.3]{Bogachev}) Suppose that $%
f:[a,b]\rightarrow \mathbb{R}$ is a continuous monotone function and suppose
that $(f_{n})_{n\geq 1}$ is a sequence of monotone functions, all with the
same monotonicity as $f$, such that for some $p\geq 1$ we have $%
\lim\limits_{n\rightarrow \infty }\left\Vert f_{n}-f\right\Vert _{p}=0$. If $%
\lim\limits_{n\rightarrow \infty }f_{n}(a)=f(a)$ and $\lim\limits_{n%
\rightarrow \infty }f_{n}(b)=f(b)$, then $\lim\limits_{n\rightarrow \infty
}\left\Vert f_{n}-f\right\Vert =0$.
\end{lemma}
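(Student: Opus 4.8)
The plan is to reduce the desired global uniform convergence to the local uniform convergence that is already guaranteed by Theorem \ref{T4}, and to use the two endpoint hypotheses to control what happens on the boundary strips, which is exactly the region Theorem \ref{T4} cannot reach. First I would reduce to a single monotonicity type: without loss of generality $f$ and all the $f_{n}$ are nondecreasing, since the nonincreasing case follows at once by replacing $f$ and $f_{n}$ with $-f$ and $-f_{n}$ and noting that all hypotheses are preserved. Then I fix $\varepsilon>0$, and using the continuity and monotonicity of $f$ at the endpoints I choose $\delta>0$ small enough that $a+\delta<b-\delta$, $0\leq f(x)-f(a)<\varepsilon/3$ for all $x\in\lbrack a,a+\delta]$, and $0\leq f(b)-f(x)<\varepsilon/3$ for all $x\in\lbrack b-\delta,b]$.

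On the middle block $[a+\delta,b-\delta]\subset(a,b)$ I would apply Theorem \ref{T4} directly: since $\left\Vert f_{n}-f\right\Vert _{p}\rightarrow 0$ and the $f_{n}$ are nondecreasing, the sequence converges to $f$ uniformly on $[a+\delta,b-\delta]$. Hence there is an index $N_{1}$ so that $\left\vert f_{n}(x)-f(x)\right\vert <\varepsilon/3$ for all $x\in\lbrack a+\delta,b-\delta]$ and all $n\geq N_{1}$; in particular this already yields $f_{n}(a+\delta)\rightarrow f(a+\delta)$ and $f_{n}(b-\delta)\rightarrow f(b-\delta)$, which I will need for the boundary estimate.

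The remaining work is on the two strips, and this is where the endpoint hypotheses are used. On $[a,a+\delta]$ the monotonicity of $f_{n}$ gives $f_{n}(a)\leq f_{n}(x)\leq f_{n}(a+\delta)$ for every such $x$. For $n$ large, $f_{n}(a)\rightarrow f(a)$ forces $f_{n}(a)>f(a)-\varepsilon/3$, while $f_{n}(a+\delta)\rightarrow f(a+\delta)$ together with $f(a+\delta)-f(a)<\varepsilon/3$ forces $f_{n}(a+\delta)<f(a)+2\varepsilon/3$; thus $f_{n}(x)$ is trapped in the band $(f(a)-\varepsilon/3,\,f(a)+2\varepsilon/3)$. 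Since $f(x)$ lies in $[f(a),\,f(a)+\varepsilon/3]$ by the choice of $\delta$, comparing the two bands gives $\left\vert f_{n}(x)-f(x)\right\vert \leq 2\varepsilon/3<\varepsilon$ on $[a,a+\delta]$ for all sufficiently large $n$. The symmetric argument at $b$, using $f_{n}(b)\rightarrow f(b)$ and $f_{n}(b-\delta)\rightarrow f(b-\delta)$, handles $[b-\delta,b]$. Taking $n$ larger than the finitely many thresholds produced above yields $\left\Vert f_{n}-f\right\Vert <\varepsilon$, which proves the claim.

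I expect the only delicate point to be arranging the boundary inequalities cleanly, so that the monotone sandwich of $f_{n}$ between its two convergent endpoint values overlaps the continuity sandwich of $f$ inside a band of width controlled by $\varepsilon$; once the constants are split as $\varepsilon/3$ this is routine. It is worth observing that the endpoint hypothesis is genuinely necessary here: for $f\equiv 0$ and $f_{n}(x)=x^{n}$ on $[0,1]$ one has $\left\Vert f_{n}\right\Vert _{p}\rightarrow 0$ and $f_{n}(0)\rightarrow f(0)$, yet $f_{n}(1)\not\rightarrow f(1)$ and uniform convergence fails, so it is precisely the control of $f_{n}$ at the endpoints that makes the boundary estimate go through.
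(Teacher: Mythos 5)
Your argument is correct. Note, however, that the paper itself gives no proof of this lemma: it is simply quoted from \cite[Lemma 4.3]{Bogachev}, so there is no in-paper argument to compare against. What you have produced is a clean self-contained derivation using only material available in the paper: Theorem \ref{T4} (Lewis--Shisha) supplies uniform convergence on the middle block $[a+\delta,b-\delta]$, and the two endpoint hypotheses, combined with the monotone sandwich $f_n(a)\leq f_n(x)\leq f_n(a+\delta)$ and the continuity band $0\leq f(x)-f(a)<\varepsilon/3$, control the boundary strips; the $\varepsilon/3$ bookkeeping checks out (the two bands give $|f_n(x)-f(x)|<2\varepsilon/3$ on each strip). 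The reduction to the nondecreasing case via $f\mapsto -f$ is also valid since all hypotheses are preserved. One could alternatively prove the lemma without invoking Theorem \ref{T4} at all, by establishing pointwise convergence on a fine grid and using monotonicity throughout (which is closer in spirit to the classical P\'olya-type argument and to what one expects in the cited source), but your route is shorter given that Theorem \ref{T4} is already on the table. Your closing remark that $f_n(x)=x^n$ on $[0,1]$ shows the endpoint hypothesis cannot be dropped matches the counterexample the authors themselves give after Corollary \ref{uniform_conv_local}.
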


First of all we give a simple sufficient condition to obtain the pointwise
convergence needed in the previous lemma.

\begin{lemma}
\label{Pointwise_upper}Suppose that $f:[a,b]\rightarrow \mathbb{R}$ is a
nondecreasing continuous function and suppose that $(f_{n})_{n\geq 1}$ is a
sequence of nondecreasing functions, such that for some $p\geq 1$ we have $%
\lim\limits_{n\rightarrow \infty }\left\Vert f_{n}-f\right\Vert _{p}=0$. If $%
\left( x_{n}\right) _{n\geq 1}$ is a sequence in $[a,b]$ such that $%
x_{n}\rightarrow a$ and $f_{n}(x_{n})\geq f(x_{n})$ for all $n\geq 1$, then $%
\lim\limits_{n\rightarrow \infty }f_{n}(x_{n})=\lim\limits_{n\rightarrow
\infty }f(x_{n})=f(a)$. In particular, if $f_{n}(a)\geq f(a)$ for $all$ $%
n\geq 1$, then $\lim\limits_{n\rightarrow \infty }f_{n}(a)=f(a)$.
\end{lemma}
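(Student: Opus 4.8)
The plan is to prove the two convergence statements separately, since they rely on different mechanisms, and to exploit the monotonicity of both $f$ and $f_n$ throughout. We are given that $f$ is nondecreasing and continuous, that each $f_n$ is nondecreasing, that $\|f_n-f\|_p \to 0$, and that we have a sequence $x_n \to a$ with $f_n(x_n) \geq f(x_n)$; we must show $\lim_n f_n(x_n) = \lim_n f(x_n) = f(a)$. The second limit is immediate from the continuity of $f$: since $x_n \to a$ we have $f(x_n) \to f(a)$. So the entire content lies in showing $f_n(x_n) \to f(a)$, and by the inequality $f_n(x_n) \geq f(x_n)$ together with $f(x_n) \to f(a)$, it suffices to produce a matching upper bound, i.e.\ to show $\limsup_n f_n(x_n) \leq f(a)$.

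\medskip

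To control $f_n(x_n)$ from above, I would use monotonicity of $f_n$ to convert a pointwise value into an integral average, which is where the $L^p$-convergence hypothesis can be brought to bear. Fix a small $\delta>0$. Since $x_n \to a$, for all large $n$ we have $x_n < a+\delta$, and because $f_n$ is nondecreasing, $f_n(x_n) \leq f_n(t)$ for every $t \in [x_n, a+\delta]$, hence in particular $f_n(x_n) \le f_n(t)$ for all $t \in [a+\delta, a+2\delta]$ once $x_n < a+\delta$. Averaging over that fixed interval gives
\begin{equation*}
f_n(x_n) \leq \frac{1}{\delta}\int_{a+\delta}^{a+2\delta} f_n(t)\,dt.
\end{equation*}
Now the right-hand side converges, as $n \to \infty$, to $\frac{1}{\delta}\int_{a+\delta}^{a+2\delta} f(t)\,dt$: indeed, $L^p$-convergence on $[a,b]$ implies $L^1$-convergence on the subinterval $[a+\delta,a+2\delta]$ by Hölder's inequality (with $\|\mathbf{1}\|_{p'}(2\delta)^{1/p}$ as the constant), and the integral is a continuous linear functional in $L^1$. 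Passing to the limsup we obtain
\begin{equation*}
\limsup_{n\rightarrow\infty} f_n(x_n) \leq \frac{1}{\delta}\int_{a+\delta}^{a+2\delta} f(t)\,dt.
\end{equation*}

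\medskip

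The final step is to let $\delta \to 0^{+}$. The averages $\frac{1}{\delta}\int_{a+\delta}^{a+2\delta} f(t)\,dt$ are averages of $f$ over intervals shrinking to the right endpoint $a$ from above, so by continuity of $f$ at $a$ they converge to $f(a)$. Since the left-hand side $\limsup_n f_n(x_n)$ does not depend on $\delta$, we conclude $\limsup_n f_n(x_n) \leq f(a)$. Combining with the lower bound $f_n(x_n) \geq f(x_n) \to f(a)$ yields $\lim_n f_n(x_n) = f(a)$, as desired. The particular case $f_n(a) \geq f(a)$ follows by taking the constant sequence $x_n = a$. The main obstacle, if any, is purely bookkeeping: ensuring the averaging interval $[a+\delta, a+2\delta]$ lies inside $[a,b]$ (true for $\delta$ small) and that $x_n < a+\delta$ holds for $n$ large so the monotonicity bound applies on the whole fixed interval; no deep estimate is needed beyond the single trick of bounding a monotone function's value by an average over a fixed interval to its right.
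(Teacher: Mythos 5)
Your proof is correct, and it rests on exactly the same mechanism as the paper's: monotonicity of $f_n$ pushes the value $f_n(x_n)$ onto a fixed interval strictly to the right of $a$, where the $L^p$ hypothesis can be integrated against it. The difference is purely in the packaging. The paper argues by contradiction: it first reduces to the case where $\lim_n f_n(x_n)$ exists, extracts an $\varepsilon$ with $f_n(x_n)-f(x_n)>\varepsilon$ for large $n$, uses continuity and monotonicity of $f$ to get $f_n(x)-f(x)>\varepsilon/2$ on a fixed interval $[y_1,y_0]$ with $y_1>a$, and integrates to contradict $\left\Vert f_n-f\right\Vert _p\to 0$. You instead run the argument directly: the squeeze $f(x_n)\leq f_n(x_n)\leq \frac{1}{\delta}\int_{a+\delta}^{a+2\delta}f_n$, the observation that $L^p$ convergence on $[a,b]$ gives convergence of the averages, and then $\delta\to 0^+$. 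Your version is slightly cleaner in that it needs no passage to subsequences, no assumption that $\lim_n f_n(x_n)$ exists (possibly infinite), and no explicit $\varepsilon$ bookkeeping; what the paper's version buys in exchange is an explicit quantitative contradiction, $\left\Vert f_n-f\right\Vert _p^p>(y_0-y_1)(\varepsilon/2)^p$, which is in the spirit of the estimates used later in the Main Theorem. One immaterial slip: the H\"older constant for the interval $[a+\delta,a+2\delta]$ is $\delta^{1-1/p}$ (the interval has length $\delta$, and the exponent is the conjugate one), not $(2\delta)^{1/p}$; this does not affect the limit. Everything else, including the reduction of the particular case to the constant sequence $x_n=a$, is fine.
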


\begin{proof}
To avoid unnecessary reasonings with subsequences, we may assume that $%
\lim\limits_{n\rightarrow \infty }f_{n}(x_{n})$ exists, including the case
when it is unbounded. By way of contradiction, suppose that $%
\lim\limits_{n\rightarrow \infty }f_{n}(x_{n})=f(a)$ does not hold.\textrm{\ 
}Then, since $f(x_{n})\rightarrow f(a)$, there exists $\varepsilon >0$ such
that for sufficiently large $n$ we have $f_{n}(x_{n})-f(x_{n})>\varepsilon $%
. Letting $n\rightarrow \infty $ we get $\lim\limits_{n\rightarrow \infty
}f_{n}(x_{n})-f(a)>\varepsilon /2$. From the continuity of $f$ it results
the existence of $y_{0}\in \lbrack a,b]$ such that $\lim\limits_{n%
\rightarrow \infty }f_{n}(x_{n})-f(x)>\varepsilon /2$ for all $x\in \lbrack
a,y_{0}]$. In particular we get $\lim\limits_{n\rightarrow \infty
}f_{n}(x_{n})-f(y_{0})>\varepsilon /2$. This implies the existence of $%
n_{1}\in \mathbb{N}$ such that 
\begin{equation}
f_{n}(x_{n})-f(y_{0})>\varepsilon /2,\text{ for all }n\geq n_{1}.
\label{epsilon/2}
\end{equation}%
Let us choose arbitrary $y_{1}\in (a,y_{0})$. Since $x_{n}\rightarrow a$
there is $n_{2}\in \mathbb{N}$ (which may depends only on $y_{1}$) such that 
$x_{n}\leq y_{1}$ for all $n\geq n_{2}$. By the monotonicity of $f$ and $%
f_{n}$, for each $n$, and by (\ref{epsilon/2}), it results 
\begin{equation*}
f(x)\leq f(y_{0})<f_{n}(x_{n})\leq f_{n}(x)\text{, for all }n\geq \max
\{n_{1},n_{2}\}\text{ and }x\in \lbrack y_{1},y_{0}]\text{.}
\end{equation*}%
This implies 
\begin{eqnarray*}
&&\int\limits_{a}^{b}\left\vert f_{n}(x)-f(x)\right\vert ^{p}dx \\
&\geq &\int\limits_{y_{1}}^{y_{0}}\left\vert f_{n}(x)-f(x)\right\vert
^{p}dx\geq (y_{0}-y_{1})\left\vert f_{n}(x_{n})-f(y_{0})\right\vert
^{p}>(y_{0}-y_{1})\cdot \left( \frac{\varepsilon }{2}\right) ^{p}
\end{eqnarray*}%
for all $n\geq \max \{n_{1},n_{2}\}$.\newline
Clearly this implies that we cannot have $\lim\limits_{n\rightarrow \infty
}\left\Vert f_{n}-f\right\Vert _{p}=0$ and this is a contradiction. In
conclusion, we have $\lim\limits_{n\rightarrow \infty
}f_{n}(x_{n})=\lim\limits_{n\rightarrow \infty }f(x_{n})=f(a)$$.$
\end{proof}

The following lemma is inspired by the inequality of Markov for real
polynomials.

\begin{lemma}
\label{Marker}Let $P:[a,b]\rightarrow \mathbb{R}$ be a nondecreasing
polynomial function of degree at most $m\geq 1$. Then, 
\begin{equation*}
P\left( a+\frac{b-a}{2m^{2}+1}\right) \leq P(a)+\frac{2m^{2}}{2m^{2}+1}\cdot
(P(b)-P(a))\text{.}
\end{equation*}
\end{lemma}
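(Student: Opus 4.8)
The plan is to derive the bound directly from Markov's inequality for the derivative of a polynomial. Recall that if $Q$ is a real polynomial of degree at most $m$ on a compact interval $[a,b]$, then $\|Q'\|_{\infty}\leq \frac{2m^{2}}{b-a}\|Q\|_{\infty}$, where both norms are taken over $[a,b]$. This is the classical Markov inequality on $[-1,1]$ (where the constant is $m^{2}$) transported by the affine change of variable $x=\frac{a+b}{2}+\frac{b-a}{2}\,t$, which produces the extra factor $2/(b-a)$.

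First I would normalize. If $P(b)=P(a)$, then since $P$ is nondecreasing it is constant, and both sides of the claimed inequality equal $P(a)$, so there is nothing to prove. Assume therefore $P(b)>P(a)$ and set
\[
S(x)=\frac{P(x)-P(a)}{P(b)-P(a)}.
\]
Then $S$ is a nondecreasing polynomial of degree at most $m$ with $S(a)=0$ and $S(b)=1$, and in terms of $S$ the desired inequality reads $S\!\left(a+\frac{b-a}{2m^{2}+1}\right)\leq \frac{2m^{2}}{2m^{2}+1}$.

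The key observation is that $\|S\|_{\infty}=1$ on $[a,b]$: because $S$ is nondecreasing with $S(a)=0$ and $S(b)=1$, we have $0\leq S(x)\leq 1$ for every $x\in[a,b]$, so $\max_{[a,b]}|S|=S(b)=1$. Applying Markov's inequality to $S$ then yields $0\leq S'(x)\leq \frac{2m^{2}}{b-a}$ for all $x\in[a,b]$ (the lower bound because $S$ is nondecreasing). Writing $h=\frac{b-a}{2m^{2}+1}$ and integrating this bound over $[a,a+h]$ gives
\[
S(a+h)=S(a+h)-S(a)=\int_{a}^{a+h}S'(x)\,dx\leq \frac{2m^{2}}{b-a}\cdot h=\frac{2m^{2}}{2m^{2}+1},
\]
which is exactly the normalized inequality; multiplying back by $P(b)-P(a)$ and adding $P(a)$ recovers the statement.

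There is no substantial obstacle beyond bookkeeping. The one point requiring care is the evaluation $\|S\|_{\infty}=1$, which is precisely where the monotonicity hypothesis on $P$ is used: without it the sup norm of $S$ on $[a,b]$ could exceed $1$, and the Markov bound on $S'$ would be correspondingly weaker. The only other point to watch is tracking the Markov constant on a general interval correctly, namely $\frac{2m^{2}}{b-a}$ rather than $\frac{m^{2}}{b-a}$, since the textbook inequality is stated on $[-1,1]$; this is exactly what makes the constant $2m^{2}$ (and not $m^{2}$) appear in the denominator $2m^{2}+1$ defining the evaluation point.
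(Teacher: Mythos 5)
Your proof is correct and follows essentially the same route as the paper's: normalize $P$ so that it runs from $0$ to $1$, use monotonicity to see the sup norm of the normalized polynomial is $1$, apply Markov's inequality to bound the derivative by $2m^{2}/(b-a)$, and integrate that bound over the initial subinterval of length $(b-a)/(2m^{2}+1)$. The only cosmetic differences are that you work directly on $[a,b]$ with the general-interval Markov constant rather than reducing to $[0,1]$ and then to $[-1,1]$ in two steps, and that you explicitly dispose of the degenerate case $P(a)=P(b)$, which the paper leaves implicit.
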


\begin{proof}
We start with the special case $a=0$, $b=1$, $P(0)=0$ and $P(1)=1$. Let us
define on $[-1,1]$ the polynomial $Q(x)=P((x+1)/2) $. From the Markov's
inequality we have 
\begin{equation*}
\max_{-1\leq x\leq 1}\left\vert Q^{\prime }(x)\right\vert \leq m^{2}\cdot
\max_{-1\leq x\leq 1}\left\vert Q(x)\right\vert
\end{equation*}%
. Since $Q^{\prime }(x)=\frac{1}{2}P^{\prime }\left( \frac{x+1}{2}\right) $,
this implies 
\begin{equation*}
\max_{-1\leq x\leq 1}\left\vert P^{\prime }\left( \frac{x+1}{2}\right)
\right\vert \leq 2m^{2}\cdot \max_{-1\leq x\leq 1}\left\vert P\left( \frac{%
x+1}{2},\right) \right\vert
\end{equation*}
hence 
\begin{equation*}
\max_{0\leq x\leq 1}\left\vert P^{\prime }(x)\right\vert \leq 2m^{2}\cdot
\max_{0\leq x\leq 1}\left\vert P(x)\right\vert =2m^{2}\text{.}
\end{equation*}
Then, by the mean value theorem, we have 
\begin{equation*}
P\left( \frac{1}{2m^{2}+1}\right) =\int\limits_{0}^{1/(2m^{2}+1)}P^{\prime
}(x)dx=\frac{1}{2m^{2}+1}P^{\prime }(x_{m})\leq \frac{2m^{2}}{2m^{2}+1}.
\end{equation*}

Now, let us consider the general case. It is immediate that $%
P_{1}:[0,1]\rightarrow \mathbb{R}$, defined as $%
P_{1}(x)=(P(a+(b-a)x)-P(a))/(P(b)-P(a))$, is a nondecreasing polynomial of
degree at most $m$, such that $P_{1}(0)=0$ and $P_{1}(1)=1$. It means that $%
P_{1}\left( \frac{1}{2m^{2}+1}\right) \leq \frac{2m^{2}}{2m^{2}+1}$, from
which the desired conclusion easily follows.
\end{proof}

\section{Proof of the Main Theorem}

If $f$ is a constant, then the conclusion is immediate since $\mathbf{SI}%
_{m}^{0}(\Delta _{n},f,p)=f$ for any $n$. Therefore, in what it follows, we
assume that $f$ is not a constant. By Corollary \ref{uniform_conv_local}, it
results that $\left( \mathbf{SI}_{m}^{0}(\Delta _{n},f,p)\right) _{n\geq 1}$
converges uniformly to $f$ on any interval $[c,d]$ with $a<c<d<b$. In order
to prove the desired uniform convergence on \ $\left[ a,b\right] $, it is
enough to prove that $\lim\limits_{n\rightarrow \infty }\mathbf{SI}%
_{m}^{0}(\Delta _{n},f,p)(a)=f(a)$ and $\lim\limits_{n\rightarrow \infty }%
\mathbf{SI}_{m}^{0}(\Delta _{n},f,p)(b)=f(b)$, respectively (see, e. g.
Lemma 4.3 in \cite{Bogachev}). Without loss of generality we may assume that 
$f(a)=0$. In fact, supposed that the thesis holds for such functions, given
a continuous nondecreasing generic function $f$ we set $g(x)=f(x)-f(a).$
Then $g$ is continuous nondecreasing and $g(a)=0$. So $\lim\limits_{n%
\rightarrow \infty }\left\Vert \mathbf{SI}_{m}^{0}(\Delta
_{n},g,p)-g\right\Vert =0$. Now, by Proposition \ref{properties approx}(i),
we have $\mathbf{SI}_{m}^{0}(\Delta _{n},g,p)$ $=\mathbf{SI}_{m}^{0}(\Delta
_{n},f,p)-f(a)$, for all $n\geq 1$. Therefore 
\begin{eqnarray*}
\lefteqn{\lim\limits_{n\rightarrow \infty }\left\Vert \mathbf{SI}%
_{m}^{0}(\Delta _{n},f,p)-f\right\Vert } \\
&=&\lim\limits_{n\rightarrow \infty }\left\Vert \mathbf{SI}_{m}^{0}(\Delta
_{n},f,p)-f(a)+f(a)-f\right\Vert \\
&=&\lim\limits_{n\rightarrow \infty }\left\Vert \mathbf{SI}_{m}^{0}(\Delta
_{n},g,p)-g\right\Vert \\
&=&0.
\end{eqnarray*}

From now on, until the end of the proof, we will set $f_{n}=\mathbf{SI}%
_{m}^{0}(\Delta _{n},f,p)$ for all $n\geq 1$. To avoid the use of
subsequences we may suppose that there are only two cases: either $%
f_{n}(a)<f(a)=0$ for all $n\geq 1$, or $f_{n}(a)\geq f(a)=0$ for all $n\geq
1 $. But in this latter one, by Lemma \ref{Pointwise_upper}, it results that 
$\lim\limits_{n\rightarrow \infty }f_{n}(a)=f(a)$ and therefore in this case
there is nothing to be proved. So, in all that follows we will suppose that $%
f_{n}(a)<f(a)=0$ for all $n\geq 1$.

The idea of the proof is to show that any subsequence of $\left(
f_{n}(a)\right) _{n\geq 1}$ contains a subsequence that converges to $0$.
Clearly, this will imply that $\lim\limits_{n\rightarrow \infty }f_{n}(a)=0$%
. For this reason, without loosing generality we may suppose that there
exists the limit $\lim\limits_{n\rightarrow \infty }f_{n}(a)$ (including the
case when this limit is $-\infty $). For any $n\geq 1$ let ${\Delta}
_{n}=(\alpha _{1}(n),\alpha _{2}(n),...,\alpha _{k_{n}}(n))$. We must
observe that for each $n$ the function $f_{n}$ cannot be a constant
function. Indeed, if $f_{n}$ would be constant (with strictly negative
constant value according to our assumption) then, taking $h(x)=0$, $x\in
\lbrack a,b]$, we obviously have $h\in \mathbf{SI}_{m}^{0}(\Delta _{n})$ and
in addition one can easily prove that $\left\Vert h-f\right\Vert
_{p}<\left\Vert f_{n}-f\right\Vert _{p}$, which contradicts the fact that $%
f_{n}$ is the best approximation of $f$ in $\mathbf{SI}_{m}^{0}(\Delta _{n})$
with respect to $\left\Vert \cdot \right\Vert _{p}$. Therefore, since $f_{n}$
is not constant, it results that there exists $j\in \{1,...,k_{n}-1\}$ such
that $f_{n}(\alpha _{j}(n))=f_{n}(a)<f_{n}(\alpha _{j+1}(n))$. In addition,
we notice that necessarily $f_{n}$ is strictly increasing on $\left[ \alpha
_{j}(n),\alpha _{j+1}(n)\right] $ (otherwise $f_{n}$ is not polynomial on
this interval). Since $f$ is not a constant function let $a_{1}\in \lbrack
a,b]$ such that $f(a_{1})=f(a)=0<f(x)$, for all $x\in (a_{1},b]$. Without
any loss of generality, suppose that $\left( \alpha _{j}(n)\right) _{n\geq
1} $ is convergent and denote its limit with $u_{0}$. If $u_{0}>a_{1}$ then
for some fixed value $x_{0}\in (a_{1},u_{0})$ and sufficiently large $n$ we
have $f_{n}(x_{0})\leq f_{n}(\alpha _{j}(n))<0<f(x_{0})$. It means that $%
f_{n}(x_{0})$ does not converge to $f(x_{0}).$ This clearly contradicts
Corollary \ref{uniform_conv_local}. Therefore we have $u_{0}\leq a_{1}$. In
particular, we have that $f(u_{0})=f(a)=0$. Now, let us prove that $%
f_{n}(\alpha _{j+1}(n))>f(a)=0$ for all $n\geq 1$. By way of contradiction,
let us consider, for some $n\geq 1,$ the function $h_{n}:[a,b]\rightarrow 
\mathbb{R}$, $h_{n}(x)=f_{n}(\alpha _{j+1}(n))$ if $x\in \lbrack
a,a_{j+1}(n)]$ and $h_{n}(x)=f_{n}(x)$ elsewhere. Then $h_{n}\in \mathbf{SI}%
_{m}^{0}(\Delta _{n})$ and $f_{n}(x)<h_{n}(x)\leq f(x)$, for all $x\in
\lbrack a,\alpha _{j+1}(n))$ (here it is important that $f_{n}$ is strictly
increasing on $\left[ \alpha _{j}(n),\alpha _{j+1}(n)\right] $, that $%
f_{n}(\alpha _{j+1}(n))\leq 0$ as well as the monotonicity of $f$).
Consequently, this implies that 
\begin{equation*}
\int\limits_{a}^{\alpha _{j+1}(n)}\left\vert f(x)-h_{n}(x)\right\vert
^{p}dx<\int\limits_{a}^{\alpha _{j+1}(n)}\left\vert f(x)-f_{n}(x)\right\vert
^{p}dx
\end{equation*}%
that easily implies 
\begin{equation*}
\int\limits_{a}^{b}\left\vert f(x)-h_{n}(x)\right\vert
^{p}dx<\int\limits_{a}^{b}\left\vert f(x)-f_{n}(x)\right\vert ^{p}dx.
\end{equation*}%
Thus, $\left\Vert h_{n}-f\right\Vert _{p}<\left\Vert f_{n}-f\right\Vert _{p}$
which, since $h_{n}\in \mathbf{SI}_{m}^{0}(\Delta _{n})$, contradicts the
fact that $f_{n}$ is the best approximation of $f$ in $\mathbf{SI}%
_{m}^{0}(\Delta _{n})$ with respect to $\left\Vert \cdot \right\Vert _{p}$.
Therefore, we have $f_{n}(\alpha _{j+1}(n))>f(a)=0$ for all $n\geq 1$.

Now let us show that $\lim\limits_{n\rightarrow \infty }f_{n}(\alpha
_{j+1}(n))=0$. \ By way of contradiction we suppose that this is not true.
In this case we may assume that there exists $\gamma _{1}>0$ such that $%
f_{n}(\alpha _{j+1}(n))>\gamma _{1}$ for sufficiently large $n$. Since $%
f(u_{0})=0$ and by the continuity of $f$, there exists $\delta >u_{0}$ such
that $f(x)<\gamma _{1}/2$ for all $x\in \lbrack u_{0},\delta ]$. Since $%
\lim\limits_{n\rightarrow \infty }\alpha _{j+1}(n)=u_{0}$, for sufficiently
large $n$, we have $\alpha _{j+1}(n)<\left( u_{0}+\delta \right) /2$.
Therefore, by the monotonicity of $f_{n}$, for sufficiently large $n$ we get 
\begin{equation*}
f(x)<\gamma _{1}/2<\gamma _{1}<f_{n}(\alpha _{j+1}(n))\leq f_{n}(x)\text{,
for all }x\in \lbrack \left( u_{0}+\delta \right) /2,\delta ].
\end{equation*}%
This implies that, for some $x_{0}\in \lbrack \left( u_{0}+\delta \right)
/2,\delta ]$, $f_{n}(x_{0})$ does not converge to $f(x_{0})$; a
contradiction. Therefore, we have $\lim\limits_{n\rightarrow \infty
}f_{n}(\alpha _{j+1}(n))=0$.

Now, suppose that $n\geq 1$ is fixed and, to simplify the notations, let us
denote $v_{n}=f_{n}(\alpha _{j+1}(n))$. Let $g_{n}: \left[ a,b\right]
\rightarrow \mathbb{R}$, defined by 
\begin{equation*}
g_{n}(x)=\left\{ 
\begin{array}{ccc}
0, & if & x\in \lbrack a,\alpha _{j}(n)]; \\ 
\frac{\left( x-\alpha _{j}(n)\right) v_{n}}{\alpha _{j+1}(n)-\alpha _{j}(n)},
& if & x\in \left[ \alpha _{j}(n),\alpha _{j+1}(n)\right]; \\ 
f_{n}(x), & if & x\in \left[ \alpha _{j+1}(n),b\right] .%
\end{array}
\right.
\end{equation*}
It is trivial to observe that $g_{n}\in \mathbf{SI}_{m}^{0}(\Delta _{n})$,
so 
\begin{equation*}
\left\Vert f_{n}-f\right\Vert _{p}\leq \left\Vert g_{n}-f\right\Vert _{p}%
\text{.}
\end{equation*}
We also easily notice that 
\begin{equation*}
\int\limits_{a}^{\alpha _{j}(n)}\left\vert f(x)-f_{n}(x)\right\vert
^{p}dx>\int\limits_{a}^{\alpha _{j}(n)}\left\vert f(x)-g_{n}(x)\right\vert
^{p}dx
\end{equation*}%
and%
\begin{equation*}
\int\limits_{\alpha _{j+1}(n)}^{b}\left\vert f(x)-f_{n}(x)\right\vert
^{p}dx=\int\limits_{\alpha _{j+1}(n)}^{b}\left\vert f(x)-g_{n}(x)\right\vert
^{p}dx\text{.}
\end{equation*}%
Hence%
\begin{equation*}
\int\limits_{\alpha _{j}(n)}^{\alpha _{j+1}(n)}\left\vert
f(x)-f_{n}(x)\right\vert ^{p}dx<\int\limits_{\alpha _{j}(n)}^{\alpha
_{j+1}(n)}\left\vert f(x)-g_{n}(x)\right\vert ^{p}dx\text{.}
\end{equation*}%
This further implies that 
\begin{equation}  \label{dise}
\int\limits_{\alpha _{j}(n)}^{u_{n}}\left\vert f(x)-f_{n}(x)\right\vert
^{p}dx<\int\limits_{\alpha _{j}(n)}^{\alpha _{j+1}(n)}\left\vert
f(x)-g_{n}(x)\right\vert ^{p}dx,
\end{equation}%
where $u_{n}=\alpha _{j}(n)+\frac{1}{2m^{2}+1}\left( \alpha _{j+1}(n)-\alpha
_{j}(n)\right) $.

Applying the mean value theorem in both integrals in (\ref{dise}), there
exist $c_{n}\in \left( \alpha _{j}(n),u_{n}\right) $ and $d_{n}\in \left(
\alpha _{j}(n),\alpha _{j+1}(n)\right) $, such that%
\begin{equation*}
\frac{\alpha _{j+1}(n)-\alpha _{j}(n)}{2m^{2}+1}\cdot \left\vert
f(c_{n})-f_{n}(c_{n})\right\vert ^{p}<\left( \alpha _{j+1}(n)-\alpha
_{j}(n)\right) \left\vert f(d_{n})-g_{n}(d_{n})\right\vert ^{p}\text{,}
\end{equation*}%
which by simple calculations gives%
\begin{equation}
\left\vert f(c_{n})-f_{n}(c_{n})\right\vert <\left( 2m^{2}+1\right)
^{1/p}\left\vert f(d_{n})-g_{n}(d_{n})\right\vert \text{.}
\label{partial ineq}
\end{equation}%
As $f_{n}$ is nondecreasing, by Lemma \ref{Marker}, we get that 
\begin{eqnarray*}
f_{n}\left( \alpha _{j}(n)\right) &\leq &f_{n}(c_{n})\leq f_{n}\left(
u_{n}\right) \\
&\leq &f_{n}\left( \alpha _{j}(n)\right) +\frac{2m^{2}}{2m^{2}+1}\cdot
(f_{n}\left( \alpha _{j+1}(n)\right) -f_{n}\left( \alpha _{j}(n)\right) )%
\text{.}
\end{eqnarray*}%
This implies that there exists $t_{n}\in \left[ 0,\frac{2m^{2}}{2m^{2}+1}%
\right] $, such that 
\begin{eqnarray*}
f_{n}(c_{n}) &=&f_{n}\left( \alpha _{j}(n)\right) +t_{n}\cdot (f_{n}\left(
\alpha _{j+1}(n)\right) -f_{n}\left( \alpha _{j}(n)\right) ) \\
&=&(1-t_{n})\cdot f_{n}\left( \alpha _{j}(n)\right) +t_{n}\cdot f_{n}\left(
\alpha _{j+1}(n)\right) \\
&=&(1-t_{n})\cdot f_{n}\left( a\right) +t_{n}\cdot v_{n}\text{.}
\end{eqnarray*}%
Returning to inequality (\ref{partial ineq}), we get 
\begin{equation}  \label{ineq2}
\left\vert (1-t_{n})\cdot f_{n}\left( a\right) +t_{n}\cdot
v_{n}-f(c_{n})\right\vert <\left( 2m^{2}+1\right) ^{1/p}\left\vert
f(d_{n})-g_{n}(d_{n})\right\vert \text{.}
\end{equation}

As $c_{n},d_{n}\in \left( \alpha _{j}(n),\alpha _{j+1}(n)\right) $, and $%
\lim\limits_{n\rightarrow \infty }\alpha _{j+1}(n)\leq a_{1}$ and $%
f(a_{1})=f(a)=0$, by the monotonicity of $f$, it easily results $%
f(c_{n})\rightarrow 0$ and $f(d_{n})\rightarrow 0$. Then, by the
construction of $g_{n}$, we have $0\leq g_{n}(x)\leq f_{n}(\alpha _{j+1}(n))$%
, for all $x\in \left[ \alpha _{j}(n),\alpha _{j+1}(n)\right] $. In
particular we have $0\leq g_{n}(d_{n})\leq f_{n}(\alpha _{j+1}(n))$. Hence,
since $\lim\limits_{n\rightarrow \infty }f_{n}(\alpha _{j+1}(n))=0$, it
follows that $\lim\limits_{n\rightarrow \infty }$ $g_{n}(d_{n})=0$. This
means that both expressions in the inequality (\ref{ineq2}) converge to $0$.

Next, since $0\leq t_{n}<1$, it follows that $t_{n}\cdot f_{n}\left( \alpha
_{j+1}(n)\right) \rightarrow 0$. On the other hand, as $1-t_{n}\geq \frac{1}{%
2m^{2}+1}$, we have that $f_{n}(a)\rightarrow 0.$ In fact, if $%
(1-t_{n})\cdot f_{n}\left( a\right) +t_{n}\cdot f_{n}\left( \alpha
_{j+1}(n)\right) -f(c_{n})$ would not converge to $0$ we have a
contradiction, since the absolute value of the expression in (\ref{ineq2})
is bounded by an expression converging to $0$. In conclusion, we just proved
that any subsequence of $\left( f_{n}\left( a\right) \right) _{n\geq 1}$
contains a subsequence that converges to $0$ (please note again that all
subsequences were denoted the same to avoid too complicated notations).
Clearly this implies that $\lim\limits_{n\rightarrow \infty }f_{n}(a)=0$.

It remains to prove that $\lim\limits_{n\rightarrow \infty }f_{n}(b)=f(b)$.
First, let us notice that that $\lim\limits_{n\rightarrow \infty
}f_{n}(a)=f(a)$ if $f$ is continuous and nonincreasing with $f_{n}=\mathbf{SD%
}_{m}^{0}(\Delta _{n},f,p)$, for all $n\geq 1$. Indeed, this is immediate
from the first part of the proof taking into account that obviously we have $%
\mathbf{SI}_{m}^{0}(\Delta _{n},-f,p)=-\mathbf{SD}_{m}^{0}(\Delta _{n},f,p)$%
, for all $n\geq 1$. Now, suppose again that $f$ is nondecreasing.
Considering the function $g(x)=f(a+b-x)$, it is easily seen that $\mathbf{SI}%
_{m}^{0}(\Delta _{n},f,p)(b)=\mathbf{SD}_{m}^{0}({\Delta }_{n}^{\prime
},g,p)(a)$, where ${\Delta }_{n}^{\prime }=\left( \beta _{1}(n),\beta
_{2}(n),...,\beta _{k_{n}}(n)\right) $ and $\beta _{i}=a+b-\alpha
_{k_{n}-i+1}$, $i=\overline{1,k_{n}}$. Since $g$ is continuous and $%
\left\Vert {\Delta }_{n}^{\prime }\right\Vert \rightarrow 0$, the previous
result implies $\lim\limits_{n\rightarrow \infty }\mathbf{SD}_{m}^{0}({%
\Delta }_{n}^{\prime },g,p)(a)=g(a)$ which implies that $\lim\limits_{n%
\rightarrow \infty }\mathbf{SI}_{m}^{0}(\Delta _{n},f,p)(b)=f(b)$. It is
clear now that we can apply Lemma \ref{uc nondecreasing}, which means that $%
\lim\limits_{n\rightarrow \infty }\left\Vert \mathbf{SI}_{m}^{0}(\Delta
_{n},f,p)-f\right\Vert =0$, and the proof is complete. $\blacksquare$

\section{Some open questions}

It remains, of course, to extend the results obtained here for $l=0$, to an
arbitrary value of $l$. At this end, let us briefly discuss the case $l\geq
1 $. To do this, let us notice that the crucial point in the proof of the
Main Theorem is that the best $L^{p}$ approximation is constantly negative
until a given knot $\alpha _{j}(n)$ and positive on the following knot $%
\alpha _{j+1}(n)$. In the case $l\geq 1$ this property may not hold, or at
least a different type of reasoning is needed to prove it. Actually, for
important classes of partitions (see the equidistant partitions or even the
partitions based on the \ Chebyshev knots of the first kind, for instance),
it would be suffice to prove that if the function is constantly negative
until a given knot $\alpha _{j}(n)$ then it is positive on the knot $\alpha
_{j+k}(n)$ where $k$ is a constant that does not depend on $n$.

Another interesting problem is to consider instead of the $L^{p}$ norms the
more general approach with monotone norms.

Moreover, it would interesting to find an estimation for the rate of the
uniform convergence of the sequences $\left( \mathbf{SI}_{m}^{0}(\Delta
_{n},f,p)\right) _{n\geq 1}$ and of the sequence $\left( \mathbf{SD}%
_{m}^{0}(\Delta _{n},f,p)\right) _{n\geq 1}$, respectively.

Then of course, as we already mentioned this problem earlier, it would be
interesting to study whether the assumption $m\geq 2l+1$ can be relaxed in
the the statements of Proposition 3 and Corollary 5 by only assuming that $%
m\geq l+1$. Of course, in this case another approach is needed that does not
use Theorem 1 in \cite{Passow} which necessarily implies the limitation $%
m\geq 2l+1$.

Finally, it remains the question whether the main theorem remains true if we
drop the assumptions on the monotonicity of $f$ and of the best piecewise
polynomial approximations. But again, these limitations are necessarily
implied by Theorem 1 in \cite{Shisha} and Theorem 1 in \cite{Passow}.
Therefore, in order to generalize the main theorem for arbitrary not
necessarily monotone functions, a new approach is needed that does not use
the aforementioned theorems.

\newpage

\textbf{Acknowledgment}

The contribution of Lucian Coroianu was possible with the support of
Ministry of Research and Innovation, CNCS-UEFISCDI, project number
PN-III-P1-1.1-PD-2016-1416, within PNCDI III.

\end{document}